\newcommand{\Prod}{\mathop{\prod}\limits}
\newcommand{\Lim}{\mathop{\lim}\limits}
\newcommand\mycom[2]{\genfrac{}{}{0pt}{}{#1}{#2}}
\def\dint{\displaystyle \int}
\theoremstyle{plain}
\newtheorem*{thm*}{Theorem}
\newtheorem{thm}{Theorem}[section]
\newtheorem{conj}{Conjecture}[section]
\newtheorem{lem}{Lemma}[section]
\theoremstyle{definition}
\numberwithin{equation}{section}
\title{On the Lang-Trotter conjecture for a class of non-generic abelian surfaces}
\author{Mohammed Amin Amri} 
 \newcommand{\Addresses}{{% additional braces for segregating \footnotesize
  \bigskip
 %\footnotesize

  Mohammed Amin.~Amri, \textsc{Higher School of Education and Training, Ibn Tofail University,Kenitra, Morocco}\par\nopagebreak
  \textit{E-mail address}, Mohammed Amin~Amri: \texttt{amri.amine.mohammed@gmail.com}

}}   
\begin{document}

\maketitle
%\linenumbers 
\begin{abstract}
\sloppy In the present article, we formulate a conjectural uniform error term in the Chebotarev-Sato-Tate distribution for abelian surfaces $\mathbb{Q}$-isogenous to a product of not $\overline{\mathbb{Q}}$-isogenous non-CM-elliptic curves, established by the author in \cite[Theorem 1.1]{Amri22}.  As a consequence, we provide a direct proof to the generalized Lang-Trotter conjecture recently formulated and studied in \cite{Chen}. 
\end{abstract}
%\keywords{Sign changes; Fourier coefficients of cusp forms; Sato-Tate Equidistribution.}\\
%\subjclass{11F03; 11F30; 11F37.}
\section{Introduction}
The study of the traces of the images of Frobenius elements by Galois representations coming from arithmetic geometry is of intrinsic interest, due to the wealth of information that they encode about the arithmetic properties of the objects they come from these Galois representations. One of the most striking works in this direction is the beautiful probabilistic model developed by S. Lang and H. Trotter in \cite{Lang} which leads them to predict the asymptotic behaviour of the traces of the image of Frobenius elements by the Galois representation $\rho_{E,\ell}$ associated to a non-CM elliptic curve $E$ defined over $\mathbb{Q}$. More precisely, for an arbitrary integer $t\in\mathbb{Z}$ the Lang-Trotter conjecture states that
$$
\#\{p\le x\;|\; \mathrm{Tr}\, \rho_{E,\ell}(\mathrm{Frob}_{p})=t \}\sim C(E,t) \dfrac{\sqrt{x}}{\log x}
$$
where $C(E,t)$ is an explicit constant depending on the image of $\rho_{E,\ell}$. \\

Lately, several authors studied this problem in more general contexts, such as higher dimensional abelian varieties (for instance see \cite{Chen,CW,Cojocaru,Katz}). The primary  goal of this paper is to build bridges between the Chebotarev-Sato-Tate distribution (cf., \cite{Amri22}) and the generalized Lang-Trotter conjecture for abelian varieties and thus show that the depth of the problem is probably hidden in the structure of the error term in the aforementioned distribution. Specifically, by adapting the framework from \cite{GJ} we shall investigate the rate of convergence of the limiting distribution given by \cite[Proposition 3.1]{Amri22} in the special case of abelian surfaces $\mathbb{Q}$-isogenous to a product of not $\overline{\mathbb{Q}}$-isogenous non-CM elliptic curves and derive a conjectural error term (cf., \thref{conj}).  As a consequence we give a conditional proof to the generalized Lang-Trotter conjecture (cf., \thref{mainTh}) formulated and studied in \cite{Chen}. \\

The paper is organized as follows. In the first section, we discuss the matter of the independence of arithmetic distribution of Frobenius traces at finite and infinite places which leads us to state (see \thref{CST}) a particular case of \cite[Theorem 1.1]{Amri22}. We thus set up notation to be used throughout the paper. In the second section, we  examine the rate of convergence in the limiting distribution discussed in the first section, and we end up by formulating a conjectural uniform error term (see \thref{conj}). In the last section, we give a conditional proof to the generalized Lang-Trotter conjecture. 
\paragraph*{Notation and terminology.} Throughout this paper we shall adopt the following notations and terminology.\\
For a sequence $(s_m)_{m\ge0}$ we write $\Lim_{m \overset{\backsim}{\to}\infty} s_m$ to designate the limit of the sequence when $m$ goes to $\infty$ under the ordering by divisibility, to make it concert according to \cite{Cojocaru}, one may take 
$$
\lim_{m \overset{\backsim}{\to}\infty} s_m:=\lim_{n\to \infty} s_{m_n}\;\;\text{where}\;\; m_n:=\prod_{\ell\le n}\ell^{n}.
$$
For a set $S$ and functions $f, g: S\longrightarrow \mathbb{R}$. The notation $f=O(g)$ (or $f\ll g$) signifies that there exist a real number $A>0$ and a subset $S_0$ of $S$ such that $|f(x)|\le A|g(x)|$ for every $x\in S_0$.The constant $A$ is called the implied constant. We write $f\asymp g$ to signify that there exist real numbers $A,B>0$  and a subset $S_0$ of $S$ such that $A|g(x)|\le |f(x)|\le B |g(x)|$ for every $x\in S_0$.
\section{Arithmetic distribution}
Let $A/\mathbb{Q}$ be a principally polarized abelian surface isogenous to a product of not $\overline{\mathbb{Q}}$-isogenous non-CM elliptic curves. Let $m>1$ be an integer, the action of the Galois group $G_{\mathbb{Q}}:={\rm Gal}(\overline{\mathbb{Q}}/\mathbb{Q})$ on the $m$-torsion points $A[m]\subset A(\overline{\mathbb{Q}})$ gives rise to a mod-$m$ Galois representation 
$$
\bar{\rho}_{A,m} : G_{\mathbb{Q}}\longrightarrow {\rm GSp}_4(\mathbb{Z}/m\mathbb{Z}).
$$
By taking the inverse limit over all integers $m$ we obtain an adelic representation
$$
\hat{\rho}_{A} : G_{\mathbb{Q}}\longrightarrow \mathrm{GSp}_4(\hat{\mathbb{Z}}).
$$
Let $\ell$ be a prime, by taking the limit over all powers of $\ell$ torsion points we obtain the $\ell$-adic representation attached to $A$
$$
\rho_{A,\ell} : G_{\mathbb{Q}}\longrightarrow \mathrm{GSp}_4(\mathbb{Z}_\ell).
$$
Let $N_A$ be the product of primes of bad reduction for $A$. For $p\nmid N_{A}$, let ${\rm Frob}_p$ denote an arithmetic Frobenius at $p$ in $G_{\mathbb{Q}}$ and consider the $p$-Weil polynomial of $A$
$$
L_{A,p}(T):=\det(TI_4-\rho_{A,\ell}({\rm Frob}_p))=T^4+ a_{1,p}T^3+ a_{2,p}T^2+ p a_{1,p}T +p^2\in\mathbb{Z}[T].
$$
By a theorem of Tate we know that any root of $L_{A,p}(T)$ has absolute value equal to $\sqrt{p}$, it then follows
$$
|a_{1,p}|\le 4\sqrt{p}.
$$
Accordingly, we shall investigate the distribution of the sequence $\left(\frac{a_{1,p}}{4\sqrt{p}}\right)_{p}$ in $[-1,1]$ as $p$ varies over primes whose corresponding Artin symbols belong to a prescribed conjugacy class of the Galois group of some finite Galois extension of $\mathbb{Q}$. \\
Let $K_m/\mathbb{Q}$ be the finite Galois extension corresponding, via Galois theory, to the kernel of the representation $\bar{\rho}_{A,m}$, we let denote by $\mathscr{G}(m)$ its Galois group. Then we have
$$
\mathscr{G}(m)\cong G_{\mathbb{Q}}/{\rm Ker}\, \bar{\rho}_{A,m}\cong {\rm Im}\, \bar{\rho}_{A,m}.
$$
For $t\in\mathbb{Z}\setminus\{0\}$, we define the conjugacy class of $\mathscr{G}(m)$
$$
\mathscr{C}(m,t):=\{g\in \mathscr{G}(m)\;|\; {\rm Tr}(g)\equiv t\pmod m\}.
$$
Let $\sigma_p:=\left(\frac{K_m/\mathbb{Q}}{p}\right)$ be the Artin symbol of $p$ relative to the extension $K_m/\mathbb{Q}$. In view of the above isomorphisms, one may write $\bar{\rho}_{A,m}({\rm Frob}_p)=\sigma_p$. On the other hand, one has the congruence
$$
{\rm Tr}(\sigma_p)\equiv a_{1,p}\pmod m.
$$
Therefore
$$
\sigma_p\subseteq\mathscr{C}(m,t)\Longleftrightarrow a_{1,p}\equiv t\pmod m.
$$
The distribution of $\left(\frac{a_{1,p}}{4\sqrt{p}}\right)_{p,\sigma_p\subset \mathscr{C}(m,t)}$ in $[-1,1]$ is governed by the Chebotarev-Sato-Tate group $ST$ of the abelian surface $A$ namely ${\rm SU}(2)\times{\rm SU}(2)\times \mathscr{G}(m)$. We let $\mu$ denote the Haar measure on $ST$, that is, the product measure of the Haar measure $\mu_{ST}$ on the Sato-Tate group $ST_{A}:={\rm SU}(2)\times{\rm SU}(2)$ of $A$, and the discrete measure on $\mathscr{G}(m)$. According to \cite[Table 5]{Fite} the density function of the image of $\mu_{ST}$ under the map 
$$
\begin{array}{ccc}
  X & \longrightarrow & [-4,4]\times [-6,6]\\ 
\quad x_p &\longmapsto & \left(\frac{a_{1,p}}{\sqrt{p}},\frac{a_{2,p}}{p} \right)
\end{array}
$$
where $X:={\rm Conj}(ST_A)$ is the set of conjugacy classes of $ST_{A}$, is given by  
$$\rho(s,y)=\dfrac{1}{2\pi^2}\sqrt{\dfrac{(y-2s+2)(y+2s+2)}{s^2-4y+8}},$$ 
with support region 
$$
S:=\left\lbrace (s,y)\in\mathbb{R}^2\;|\; y\ge 2s-2,\;y\ge -2s-2,\; y\le\frac{1}{4}s^2+2\right\rbrace.
$$
Consequently, if we let $\mu_1$ being the pushforward measure of $\mu_{ST}$ with respect to the map 
$$
\begin{array}{ccccc}
  X & \longrightarrow & [-1,1]\times [-1,1] & \longrightarrow & [-1,1] \\ 
\quad x_p &\longmapsto & \left(\frac{a_{1,p}}{4\sqrt{p}},\frac{a_{2,p}}{6p} \right)&\longmapsto &\frac{a_{1,p}}{4\sqrt{p}}
\end{array}
$$
its density function is given by
$$
\Phi(s):=\dfrac{2}{\pi^2}\int_{R(4s)}\rho(4s,y){\rm d}y,
$$
where $R(s)$ denote the defining interval of $y$ imposed by the constraints of $S$.
In this setting the equidistribution result \cite[Theorem 1.1]{Amri22} yields the following.

\begin{thm}\thlabel{CST}
Let $A/\mathbb{Q}$ be an abelian surface isogenous to a product of not $\overline{\mathbb{Q}}$-isogenous non-CM elliptic curves. The sequence $\left(\frac{a_{1,p}}{4\sqrt{p}}\right)_{p,\sigma_p\in \mathscr{C}(m,t)}$ is equidistributed with respect to the pushforward measure of $\mu $. In particular
\begin{equation}\label{eq1}
\lim_{x\to\infty} \dfrac{\#\{p\le x : \frac{a_{1,p}}{4\sqrt{p}}\in I,\;\; a_{1,p}\equiv t\pmod m\}}{\pi(x)}= \dfrac{\#\mathscr{C}(m,t)}{\#\mathscr{G}(m)}\int_{I}\Phi(s){\rm d}s,
\end{equation}
for any subinterval $I\subset [-1,1]$.
\end{thm}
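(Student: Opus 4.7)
The plan is to obtain \thref{CST} as a direct specialization of the general Chebotarev-Sato-Tate equidistribution theorem \cite[Theorem 1.1]{Amri22}, followed by a routine computation of the pushforward measure. I would proceed in three short steps.

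First, I would identify the Chebotarev-Sato-Tate group. Since $A$ is $\mathbb{Q}$-isogenous to a product $E_1\times E_2$ of non-CM elliptic curves that are not $\overline{\mathbb{Q}}$-isogenous, Serre's open image theorem applied to each factor together with Faltings' isogeny theorem forces the Sato-Tate group to be exactly $ST_A={\rm SU}(2)\times{\rm SU}(2)$. Coupling this with the finite mod-$m$ image $\mathscr{G}(m)\cong{\rm Im}\,\bar\rho_{A,m}$ produces the product group $ST={\rm SU}(2)\times{\rm SU}(2)\times\mathscr{G}(m)$, whose Haar measure factors as $\mu=\mu_{ST_A}\otimes\mu_{\mathscr{G}(m)}$ with $\mu_{\mathscr{G}(m)}$ the normalized counting measure.

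Second, I would invoke \cite[Theorem 1.1]{Amri22}, which asserts joint equidistribution of the pairs $(x_p,\sigma_p)$ with respect to $\mu$. Composing with the rescaled trace map ${\rm SU}(2)\times{\rm SU}(2)\times\mathscr{G}(m)\to[-1,1]\times\mathscr{G}(m)$ sending $(g_1,g_2,\sigma)\mapsto\bigl(\tfrac{{\rm Tr}(g_1)+{\rm Tr}(g_2)}{4},\sigma\bigr)$ produces equidistribution of $\bigl(\tfrac{a_{1,p}}{4\sqrt p},\sigma_p\bigr)_p$ against the pushforward of $\mu$. Conditioning on $\sigma_p\in\mathscr{C}(m,t)$, which by the preamble is equivalent to $a_{1,p}\equiv t\pmod m$, yields the first assertion of the theorem. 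Evaluating this equidistribution on the indicator $\mathbf{1}_I\otimes\mathbf{1}_{\mathscr{C}(m,t)}$ and using the product structure of $\mu$ gives
$$
\lim_{x\to\infty}\dfrac{\#\{p\le x : \tfrac{a_{1,p}}{4\sqrt p}\in I,\; a_{1,p}\equiv t\pmod m\}}{\pi(x)}=\Bigl(\int_I\Phi(s)\,{\rm d}s\Bigr)\cdot\dfrac{\#\mathscr{C}(m,t)}{\#\mathscr{G}(m)},
$$
where $\Phi$ is obtained by integrating the joint density $\rho(4s,y)$ of \cite[Table 5]{Fite} over the vertical slice $R(4s)$ of the support $S$, exactly as defined before the statement.

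The main obstacle lies entirely in the black-box input \cite[Theorem 1.1]{Amri22}; once available, the proof reduces to bookkeeping. The most substantive residual step is the independence claim in the first step, which guarantees that $\mu$ is a genuine product measure: this is where the hypotheses that $E_1,E_2$ are non-CM and not $\overline{\mathbb{Q}}$-isogenous become essential, as they ensure independence of the system $\rho_{E_1,\ell}\times\rho_{E_2,\ell}$ from the mod-$m$ datum and hence the clean factorization appearing in \eqref{eq1}.
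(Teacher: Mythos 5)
Your proposal follows essentially the same route as the paper, which offers no separate argument beyond specializing \cite[Theorem 1.1]{Amri22} to the product group ${\rm SU}(2)\times{\rm SU}(2)\times\mathscr{G}(m)$ and pushing the product measure forward through the normalized trace coordinate, exactly as you do. Your added remarks on Serre's open image theorem and the non-$\overline{\mathbb{Q}}$-isogeny hypothesis simply make explicit the identification of $ST_A$ that the paper takes from \cite{Fite}, so there is nothing to correct.
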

\section{Convergence rate of the Chebotarev-Sato-Tate distribution}
At this stage we investigate the rate of convergence in \eqref{eq1} when $t$ is fixed and $I$ a subinterval of $[-1,1]$ containing $0$. In other words, keeping the notation of the previous section, we would like to have a bound for the error function
$$
E(x,t,m,I):=\dfrac{\#\{p\le x : \frac{a_{1,p}}{4\sqrt{p}}\in I,\;\; a_{1,p}\equiv t\pmod m\}}{\pi(x)}- \dfrac{\#\mathscr{C}(m,t)}{\#\mathscr{G}(m)}\int_{I}\Phi(s){\rm d}s.
$$
In \cite{AT99} Akiyama and Tanigawa conjectured that for an elliptic curve without complex multiplication such an error term has a strict square-root accuracy (cf.,\cite[\S 1.3]{MAZUR}). It seems natural and plausible that this conjecture holds true in our context too.
\begin{conj} \thlabel{conj1} For $t,m$ and $I\subset[-1,1]$ fixed, we have
$$
E(x,t,m,I)=O(x^{-1/2+\varepsilon})\quad\text{as}\;x\to\infty
$$
for every $\varepsilon> 0$.
\end{conj}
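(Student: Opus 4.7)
The plan is to imitate, in the present bidimensional and level-$m$ setting, the Beurling-Selberg / character expansion strategy that underlies the Akiyama-Tanigawa square-root prediction for a single non-CM elliptic curve. First I would replace the indicator $\mathbf{1}_{I}$ by majorant/minorant trigonometric polynomials $F_K^{\pm}$ of degree at most $K$ (to be chosen at the end), with the classical estimate $\int_{-1}^{1}(F_K^{+}-F_K^{-})\Phi(s)\,{\rm d}s \ll K^{-1}$. Expressing $F_K^{\pm}$ in the basis of pushforwards of the irreducible characters ${\rm tr}\,{\rm Sym}^j\otimes{\rm Sym}^k$ of ${\rm SU}(2)\times{\rm SU}(2)$, and using the product structure $\mu=\mu_{ST}\otimes(\#\mathscr{G}(m))^{-1}$ of the Haar measure supplied by \thref{CST}, the problem reduces to bounding, for each pair $(j,k)$ with $j+k\le K$ and each irreducible character $\chi$ of $\mathscr{G}(m)$, the twisted Frobenius sum
\[
S_{j,k,\chi}(x):=\sum_{\substack{p\le x\\ p\nmid mN_A}}\chi(\sigma_p)\,{\rm tr}\bigl({\rm Sym}^j\otimes{\rm Sym}^k\bigr)(\theta_p),
\]
where $\theta_p\in{\rm Conj}(ST_A)$ denotes the unitarized Frobenius class at $p$.

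Since $A$ is $\mathbb{Q}$-isogenous to $E_1\times E_2$ with $E_1,E_2$ pairwise non-$\overline{\mathbb{Q}}$-isogenous non-CM elliptic curves, each summand factorizes as $a_p({\rm Sym}^j E_1)\cdot a_p({\rm Sym}^k E_2)\cdot\chi(\sigma_p)$, and $S_{j,k,\chi}(x)$ is, up to finitely many bad primes, a partial sum of Dirichlet coefficients of the Rankin-Selberg-Artin $L$-function
\[
\Lambda_{j,k,\chi}(s):=L\bigl(s,\,{\rm Sym}^j E_1\boxtimes{\rm Sym}^k E_2\otimes\chi\bigr).
\]
Granted the expected analytic package for the whole family $\{\Lambda_{j,k,\chi}\}$ — automorphy, holomorphy outside trivial poles, standard functional equation and GRH — a Perron-type contour shift produces the individual bound $S_{j,k,\chi}(x)\ll \mathfrak{q}(\Lambda_{j,k,\chi})^{\kappa}\,x^{1/2}(\log x)^{2}$, with $\mathfrak{q}(\Lambda_{j,k,\chi})$ the analytic conductor and $\kappa$ an absolute constant. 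Summing over $j+k\le K$ and over $\chi$, dividing by $\pi(x)$, and optimizing $K$ as a small power of $\log x$ then formally yields $E(x,t,m,I)\ll x^{-1/2+\varepsilon}$.

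The hard part will be the analytic input itself, which is the true source of the conjectural nature of the statement: it requires (i) the full Langlands automorphy of every bidegree ${\rm Sym}^j E_1\boxtimes{\rm Sym}^k E_2$ and of all its twists by ramified Artin characters of $\mathscr{G}(m)$, (ii) GRH for the resulting family, and (iii) a conductor bookkeeping uniform in $(j,k,\chi)$ strong enough to absorb the polynomial blow-up of $\mathfrak{q}(\Lambda_{j,k,\chi})^{\kappa}$ into the factor $x^{\varepsilon}$. Items (i) and (ii) are the barriers already present in the one-dimensional Akiyama-Tanigawa problem; item (iii), specific to the product and level-$m$ refinement, would require extending the uniform analytic conductor framework developed by Rouse-Thorner in the elliptic setting to bidimensional symmetric power $L$-functions twisted by Artin representations. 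Making any one of these three ingredients unconditional is beyond current technology, which is precisely why the bound is phrased as a conjecture rather than a theorem.
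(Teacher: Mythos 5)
First, note that \thref{conj1} is stated in the paper as a \emph{conjecture} and is given no proof there: the only justification offered is the analogy with the Akiyama--Tanigawa conjecture \cite{AT99} for a single non-CM elliptic curve. So your task was not to match a proof but to supply a plausibility argument, and your route (Beurling--Selberg majorants, expansion in the characters ${\rm tr}({\rm Sym}^j\otimes{\rm Sym}^k)$ of ${\rm SU}(2)\times{\rm SU}(2)$ twisted by characters of $\mathscr{G}(m)$, and GRH plus automorphy for the resulting Rankin--Selberg family) is the standard effective-Sato--Tate machinery and is a genuinely more substantive heuristic than the paper's one-line appeal to \cite{AT99}.

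However, there is a real quantitative gap at your final step, and it is precisely the gap that makes the $x^{-1/2+\varepsilon}$ exponent a conjecture \emph{beyond} GRH rather than a consequence of it. Granting your full analytic package, each individual sum satisfies $S_{j,k,\chi}(x)\ll \mathfrak{q}^{\kappa}x^{1/2}(\log x)^{2}$ with $\mathfrak{q}^{\kappa}$ polynomial in $j,k$; after dividing by $\pi(x)$ the total error is of the shape $K^{-1}+K^{B}x^{-1/2}(\log x)^{3}$ for some $B\ge 2$ coming from the number of pairs $(j,k)$ with $j+k\le K$ and the conductor growth. If you take $K$ to be a power of $\log x$, as you propose, the Beurling--Selberg truncation term $K^{-1}$ is only a negative power of $\log x$, nowhere near $x^{-1/2+\varepsilon}$; balancing the two terms instead forces $K$ to be a small power of $x$ and yields an exponent like $x^{-1/2/(B+1)}$, i.e.\ at best the $x^{-1/4+\varepsilon}$-type bounds known conditionally in the literature (Bucur--Kedlaya, Rouse--Thorner). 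Reaching square-root accuracy would require cancellation \emph{among} the different $(j,k,\chi)$-sums over and above the GRH bound for each one individually; indeed, as discussed in \cite{MAZUR}, the Akiyama--Tanigawa statement is known to \emph{imply} GRH for the relevant $L$-functions but is not known to follow from it. So your ingredients (i)--(iii), even taken for granted, do not yield \thref{conj1}; you would need to add, as a fourth and genuinely independent hypothesis, square-root cancellation in the aggregated character expansion. With that caveat made explicit, your sketch is a reasonable account of why the conjecture is plausible and of where its true depth lies.
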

It is enlightening to note that the above error term is far from being uniform neither with respect to $m$ nor the diameter $\delta(I)$ of $I$ (because the implied constant necessarily depend on $m$ and $\delta(I)$). For our purpose, we need an error term which enable us to pass to the limit as $m \overset{\backsim}{\to}\infty$. To this end, we shall follow the heuristic sketched in \cite{GJ}. The underlying idea behind the heuristic is to suppose that the dominant contribution of 
\begin{equation}\label{eq7}
\dfrac{\#\{p\le x : \frac{a_{1,p}}{4\sqrt{p}}\in I,\;\; a_{1,p}\equiv t\pmod m\}}{\pi(x)}
\end{equation}
should match the dominant contribution of 
\begin{equation}\label{eq8}
\dfrac{\#\mathscr{C}(m,t)}{\#\mathscr{G}(m)}\int_{I}\Phi(s){\rm d}s
\end{equation}
as $m \overset{\backsim}{\to}\infty$ or $\delta(I)\to 0$.\\
Let us start first by determining the dominant contribution of \eqref{eq8} as $m \overset{\backsim}{\to}\infty$. We shall prove the following lemma.
\begin{lem} Let $F_t(m):=\frac{m\#\mathscr{C}(m,t)}{\#\mathscr{G}(m)}$, then  the limit $\Lim_{m \overset{\backsim}{\to}\infty}F_t(m)$ exists and gives rise to a well defined function $F: \mathbb{Z}\setminus\{0\}\longrightarrow[0,\infty)$ and for all subinterval $I$ of $[-1,1]$ containing $0$ and satisfying $\delta(I)<\frac{1}{\sqrt{\log m}}$ we have
\begin{equation}\label{eq5}
\dfrac{\#\mathscr{C}(m,t)}{\#\mathscr{G}(m)}\int_{I}\Phi(s){\rm d}s=\Phi(0)F(t)\dfrac{\delta(I)}{m}+O\left(\dfrac{\delta(I)}{m\log m}\right).
\end{equation}
\end{lem}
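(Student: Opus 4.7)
The plan is to split the proof into two parts: (i) establishing that the sequence $F_t(m)$ stabilizes under the divisibility ordering to a well-defined function $F$, and (ii) deriving the asymptotic \eqref{eq5} by combining this stabilization with a local Taylor expansion of $\Phi$ at $0$.

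\textbf{Step 1 (Existence of the limit).} Since $A$ is $\mathbb{Q}$-isogenous to a product of non-CM, non-$\overline{\mathbb{Q}}$-isogenous elliptic curves, Serre's open image theorem together with Ribet's independence result for products of non-isogenous non-CM elliptic curves implies that the adelic image $\hat{\rho}_{A}(G_{\mathbb{Q}})$ is open in the fibre product $\mathrm{GL}_2(\hat{\mathbb{Z}})\times_{\hat{\mathbb{Z}}^{\times}}\mathrm{GL}_2(\hat{\mathbb{Z}})$. Consequently there is a finite set of primes $S$ such that whenever $m_1, m_2$ are coprime with all prime divisors outside $S$, the Chinese Remainder Theorem applied inside this fibre product yields a factorization $\mathscr{G}(m_1 m_2)\cong \mathscr{G}(m_1)\times \mathscr{G}(m_2)$, which transforms the trace-residue condition into the near-multiplicativity $F_t(m_1 m_2)=F_t(m_1)F_t(m_2)$. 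For each individual prime $\ell$, the local sequence $F_t^{(\ell)}(\ell^n):=\ell^n \#\mathscr{C}(\ell^n,t)/\#\mathscr{G}(\ell^n)$ becomes eventually constant in $n$: this is a direct matrix count inside the mod-$\ell^n$ image, stabilizing once $n$ exceeds a threshold depending on $v_\ell(t)$ and on the $\ell$-adic conductor of $\hat{\rho}_A$. Combining these two facts, under the divisibility ordering $F_t(m)$ eventually coincides with the convergent product $F(t):=\prod_{\ell}\Lim_{n\to\infty}F_t^{(\ell)}(\ell^n)$.

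\textbf{Step 2 (Taylor expansion of $\Phi$ at $0$).} Using the explicit formula for $\Phi$ derived from \cite[Table 5]{Fite}, or more conceptually the $s\mapsto -s$ symmetry inherited from the Weyl invariance of Haar measure on $\mathrm{SU}(2)\times \mathrm{SU}(2)$, the density $\Phi$ is even, so $\Phi'(0)=0$ and $\Phi(s)=\Phi(0)+O(s^2)$ uniformly on a small neighbourhood of $0$. Integrating over any interval $I\ni 0$ of diameter $\delta(I)$ gives
\begin{equation*}
\int_I \Phi(s)\,{\rm d}s=\Phi(0)\,\delta(I)+O\left(\delta(I)^3\right).
\end{equation*}

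\textbf{Step 3 (Assembly).} Since $\#\mathscr{C}(m,t)/\#\mathscr{G}(m)=F_t(m)/m$, we multiply the two expansions and split $F_t(m)=F(t)+\bigl(F_t(m)-F(t)\bigr)$. The hypothesis $\delta(I)<1/\sqrt{\log m}$ immediately upgrades the cubic remainder to $\delta(I)^3=O(\delta(I)/\log m)$, so the contribution $\frac{F_t(m)}{m}\,O(\delta(I)^3)$ fits into the target error $O(\delta(I)/(m\log m))$. It remains to control $\frac{F_t(m)-F(t)}{m}\,\Phi(0)\,\delta(I)$ by the same magnitude, which reduces to showing $|F_t(m)-F(t)|\ll 1/\log m$ as $m\,\overset{\backsim}{\to}\,\infty$; this follows by truncating the Euler product defining $F(t)$ at the largest prime power dividing $m$ and applying Mertens-type bounds on the tail, together with the stabilization rate of each $F_t^{(\ell)}(\ell^n)$ obtained in Step 1.

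\textbf{Main obstacle.} The delicate step is the effective rate $|F_t(m)-F(t)|\ll 1/\log m$: whereas the mere existence of the limit in Step 1 is essentially formal, its quantification requires controlling how quickly the local factors $F_t^{(\ell)}(\ell^n)$ stabilize as $n$ grows and how quickly the tail of the Euler product over large primes $\ell$ converges. The Taylor expansion and the CRT factorization are, by contrast, routine once the input from Serre--Ribet is in place.
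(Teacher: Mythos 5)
Your outline follows the same architecture as the paper's argument (existence of $F$ from the open-image/CRT structure as in Chen--Jones--Serban, a second-order expansion of $\int_I\Phi$, a rate $|F_t(m)-F(t)|\ll 1/\log m$, all assembled under $\delta(I)<1/\sqrt{\log m}$), but the two quantitative estimates on which the lemma actually rests are asserted rather than proved, and the tools you name do not deliver them. First, evenness of $\Phi$ only gives $\Phi'(0)=0$; without second-order regularity at $0$ this does not imply $\Phi(s)=\Phi(0)+O(s^2)$ (an even $C^1$ function can behave like $\Phi(0)+|s|^{3/2}$). Near $0$ one has $\Phi(s)=\frac{32|s|}{3\pi^2}\bigl((s^2+1)E\bigl(1-\tfrac{1}{s^2}\bigr)-2K\bigl(1-\tfrac{1}{s^2}\bigr)\bigr)$, with an explicit $|s|$ factor and elliptic integrals whose argument blows up, so the needed input $\Phi'(s)=O(s)$ is a genuine computation; the paper obtains it explicitly from the complete elliptic integrals and the mean value theorem, and your sketch must do the equivalent rather than appeal to symmetry alone.

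Second, and more seriously, the bound $|F_t(m)-F(t)|\ll 1/\log m$ is not a Mertens-type statement. Truncating the Euler product at the largest prime $\ell_n$ dividing $m$ and using $F_t(\ell^{v_\ell(t)+1})=1+O(\ell^{-2})$ (which itself comes from the explicit local factors of Chen et al., not merely from the stabilization in $n$ of $F_t^{(\ell)}(\ell^n)$) yields only $|F_t(m)-F(t)|\ll 1/\ell_n$. Converting $1/\ell_n$ into $1/\log m$ requires relating the largest prime factor of $m$ to $\log m$; for general $m$ this is false (take $m$ a power of $2$), and it is exactly here that the restriction to highly divisible $m$ along the ordering $m\overset{\backsim}{\to}\infty$ enters: the paper invokes Erd\H{o}s's lemma on highly composite numbers to get $\log m\asymp\ell_n$. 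This bridging input is absent from your Step 3 --- you yourself flag the rate as the main obstacle --- so the proposal as written does not establish \eqref{eq5}.
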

\begin{proof}

By \cite[Theorem 6]{Serre} we know that ${\rm Im}\,\hat{\rho}_A$ is open in $G(\hat{\mathbb{Z}})$ where $G$ is the algebraic group
$$
G:={\rm GL}_2\times_{\det}{\rm GL}_2=\{(g_1,g_2)\in{\rm GL}_2\times{\rm GL}_2\;|\; \det(g_1)=\det(g_2)\}.
$$
It follows that there exists an integer $m\ge 1$ such that ${\rm Im}\,\hat{\rho}_A=\pi^{-1}({\rm Im}\,\bar{\rho}_{A,m})$ where $\pi : G(\hat{\mathbb{Z}})\longrightarrow G(\mathbb{Z}/m\mathbb{Z})$ is the natural projection. Let $m_{A}$ be the least such integer and set
$$
m_{A,t}:=m_{A}\prod_{\ell|m_{A}}\ell^{v_\ell(t)}.
$$
It has been proved by Chen et al in \cite{Chen} that the limit $\Lim_{m \overset{\backsim}{\to}\infty}F_t(m)$ exists and gives rise to a well defined function $F: \mathbb{Z}\setminus\{0\}\longrightarrow[0,\infty)$, given by
$$
F(t):= \lim_{m \overset{\backsim}{\to}\infty}F_t(m)=F_t(m_{A,t})\cdot\prod_{\ell\nmid m_{A}} F_{t}(\ell^{v_\ell(t)+1}).
$$
Moreover they calculate explicitly the universal factors, for $\ell\nmid m_{A}$
$$
F_{t}(\ell^{v_\ell(t)+1})=\left\{\begin{array}{cc}
\frac{\ell(\ell^4-\ell^3-2\ell^2+\ell+2)}{(\ell^2-1)^2(\ell-1)} & \text{if}\; \ell\nmid t \\
\frac{\ell^6 - \ell^4 - 3\ell^3 - \ell}{\ell^6 - 2\ell^4 + \ell^2}+\frac{E(\ell,t)+\delta_2(v_\ell(t))(\ell-1)}{4\ell^{e_\ell(t)+4}(\ell^2-1)^2}& \text{otherwise}
\end{array}\right.
$$
where $e_{\ell}(t):= \lfloor(v_{\ell}(t)-1)/2\rfloor$, $\delta_2(v_\ell(t)):=\left\{\begin{array}{cc}
1&\text{if}\;2|v_\ell(t)\\
0& \text{otherwise}
\end{array}\right.$ and $$E(\ell,t):=\frac{\ell^3(\ell+1)(\ell^{4e_{\ell}(t)+4}-1)+(\ell^4+\ell^3+2\ell^2+\ell+1)(\ell^{4e_{\ell}(t)}-1)}{(\ell^2+1)(\ell+1)}.$$
For $m$ a sufficiently large highly divisible number, one may write $m=m_{A,t}\Prod_{\mycom{\ell\nmid m_{A}}{\ell\le \ell_n}}\ell^{\kappa_\ell}$ where $\ell_n$ stands for the largest prime dividing $m$. Then we have $F_t(m)=F_t(m_{A,t})\Prod_{\mycom{\ell\nmid m_{A}}{\ell\le \ell_n}} F_{t}(\ell^{v_\ell(t)+1})$, and hence
$$F_t(m)-F(t)=F_t(m)\left(1-\prod_{\ell>\ell_n}F_t(\ell^{v_\ell(t)+1})\right).$$
Noting that, for $0<x<1$ we have $x<|\log(1-x)|$, and $F_t(\ell^{v_\ell(t)+1})=1+O\left(\frac{1}{\ell^2}\right)$. Then one has
$$
1-\prod_{\ell> \ell_n}F_t(\ell^{v_\ell(t)+1})<\sum_{\ell>\ell_{n}}|\log F_t(\ell^{v_\ell(t)+1})|\ll \sum_{\ell>\ell_n}\frac{1}{\ell^2}\ll \frac{1}{\ell_n}.
$$
Since $F_t(m)=O(1)$, we immediately obtain
$$
F_{t}(m)=F(t)+O\left(\frac{1}{\ell_n}\right).
$$
By virtue of \cite[Lemma 1]{Erdos} one has $\log m\asymp \ell_n $ which yields
\begin{equation}\label{eq4}
F_{t}(m)=F(t)+O\left(\frac{1}{\log m}\right).
\end{equation}
Next, we claim that $\Phi'(s)=O(s)$ as $s\to 0$. In fact, on one hand we have
 
\begin{eqnarray*}
\Phi(s)&=&\left\{\begin{array}{cc}
\dfrac{2}{\pi^{2}}\dint_{8s-2}^{4s^{2}+2}\rho(4s,y){\rm d}y  & 0\le s \le 1 
\\
 \dfrac{2}{ \pi^{2}}\dint_{-8s-2}^{4s^{2}+2}\rho(4s,y){\rm d}y  & -1\le s <0 
\end{array}\right.\\
&=& \left\{\begin{array}{cc}
\dfrac{32|s|}{3\pi^2}\left((s^2+1)E\left(1-\dfrac{1}{s^2}\right)-2K\left(1-\dfrac{1}{s^2}\right)\right) & s \in [-1,0)\cup(0,1]\\ 
\dfrac{32}{3\pi^2} & s = 0
\end{array}\right.\\
\end{eqnarray*}
where $E(s):=\dint_{0}^{1}\frac{\sqrt{1-st^{2}}}{\sqrt{1-t^2}}{\rm d}t$ and $K(s):=\dint_{0}^{1}\frac{{\rm d}t}{\sqrt{(1-t^2)(1-st^{2})}}$ are the complete elliptic integrals. On the other hand, by the mean value theorem there exists some $0<\alpha<1$ such that
\begin{eqnarray*}
|\Phi'(s)|&=& \dfrac{32}{\pi^2}\left(K\left(1-\frac{1}{s^2}\right)-s^2E\left(1-\frac{1}{s^2}\right)\right),\\
&=& \dfrac{32|s|}{\pi^2}\dint_{0}^{1}\dfrac{(1-s^2)\sqrt{1-t^2}}{\sqrt{(1-t^2)s^2+t^2}}{\rm d}t,\\
&=& \dfrac{32|s|}{\pi^{2}}\dfrac{1-s^2}{\sqrt{(1-\alpha^2)s^2+\alpha^2}}\dint_{0}^{1}\sqrt{1-t^2}{\rm d}t, \\
&\le& \dfrac{8}{\alpha\pi}|s|.
\end{eqnarray*}
It then follows from the above in conjunction with the mean value theorem that there exist $\hat{\xi},\xi\in I$ such that
\begin{equation}\label{eq2}
\dint_I \Phi(s){\rm d}s=\delta(I)\Phi(\xi)=\delta(I)(\Phi(0)+\Phi'(\hat{\xi})\xi)=\delta(I)\Phi(0)+O(\delta(I)^3).
\end{equation}
Combining \eqref{eq2} and \eqref{eq4}, we see
\begin{equation*}
\dfrac{\#\mathscr{C}(m,t)}{\#\mathscr{G}(m)}\int_{I}\Phi(s){\rm d}s=\Phi(0)F(t)\dfrac{\delta(I)}{m}+O\left(\dfrac{\delta(I)}{m\log m}\right)+O\left(\dfrac{\delta(I)^3}{m}\right)\;\text{as}\; m \overset{\backsim}{\to}\infty.
\end{equation*}
As the above formula holds uniformly with respect to $\delta(I)$, since $\delta(I)<\dfrac{1}{\sqrt{\log m}}$, then both $O$-terms in the right-hand side have the same order.
\end{proof}
Next, we proceed to determine the dominant contribution of \eqref{eq7} as $m \overset{\backsim}{\to}\infty$. Observe that
\begin{multline*}
\#\left\{p\le x : \frac{a_{1,p}}{4\sqrt{p}}\in I,\;\; a_{1,p}\equiv t\pmod m\right\}= \#\left\{ \dfrac{t^2}{16\delta(I)^2}<p\le x : \frac{a_{1,p}}{4\sqrt{p}}\in I,\right.\\ a_{1,p}\equiv t\pmod m \Bigg\}+\#\left\{p\le \dfrac{t^2}{16\delta(I)^2} : \frac{a_{1,p}}{4\sqrt{p}}\in I, a_{1,p}\equiv t\pmod m \right\}.
\end{multline*}
Since $\Lim_{m \overset{\backsim}{\to}\infty}\#\left\{p\le \frac{t^2}{16\delta(I)^2} : \frac{a_{1,p}}{4\sqrt{p}}\in I, a_{1,p}\equiv t\pmod m \right\}=0$, it follows that the first term in the right-hand side of the above displayed formula gives the dominant contribution and the second one can be seen as an error term. Moreover, by the prime number theorem, one has $$\dfrac{\#\left\{p\le \frac{t^2}{16\delta(I)^2} : \frac{a_{1,p}}{4\sqrt{p}}\in I, a_{1,p}\equiv t\pmod m \right\}}{\pi(x)}=O\left(\dfrac{t^2\log x}{x\delta(I)^{2}}\right).$$
As the main terms of \eqref{eq8} and \eqref{eq7} match as $m \overset{\backsim}{\to}\infty$, they will cancel each other and we will have
$$
E(x,t,m,I)=O\left(\dfrac{t^2\log x}{x\delta(I)^{2}}\right)+O\left(\dfrac{\delta(I)}{m\log m}\right).
$$
We retain only the $O$ term of the highest order. Thus to prevent the violation of \thref{conj1} we need to take $C_1(\log x)^2<m\le C_{2}\delta(I)\sqrt{x}$ for some constants $C_1$ and $C_2$.\\
The heuristic above substantiate the following conjecture.
\begin{conj}\thlabel{conj} Let $A/\mathbb{Q}$ be an abelian surface isogenous to a product of not $\overline{\mathbb{Q}}$-isogenous non-CM elliptic curves. There exist constants $C_1$ and $C_2$ depending on $t$ and the abelian surface $A$ such that as $I$ runs through subintervals of $[-1,1]$ containing $0$, we have
$$ \dfrac{\#\{p\le x : \frac{a_{1,p}}{4\sqrt{p}}\in I,\;\; a_{1,p}\equiv t\pmod m\}}{\pi(x)}=  \dfrac{\#\mathscr{C}(m,t)}{\#\mathscr{G}(m)}\int_{I}\Phi(s){\rm d}s+O\left(\dfrac{\delta(I)}{m\log m}\right)$$
uniformly in the region given by $\delta(I)<\frac{1}{\sqrt{\log m}}$, as long as $C_1(\log x)^2<m\le C_{2}\delta(I)\sqrt{x}$.
\end{conj}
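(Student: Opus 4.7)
The plan is to convert the heuristic immediately preceding the statement into a rigorous argument by combining three ingredients: (i) the main-term asymptotic from the preceding lemma for quantity \eqref{eq8}, (ii) a split of the prime count in \eqref{eq7} into small-prime and large-prime ranges about the threshold $p_0:=t^2/(16\delta(I)^2)$, and (iii) an effective form of \thref{CST} that is uniform in both $m$ and the diameter $\delta(I)$.

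First I would handle the small-prime range $p\le p_0$. Since $I$ contains $0$, the constraint $a_{1,p}/(4\sqrt p)\in I$ together with $|a_{1,p}|\le 4\sqrt p$ forces $|a_{1,p}|\le 4\sqrt p\,\delta(I)$; if in addition $a_{1,p}\equiv t\pmod m$ with $m$ highly divisible, then for $p\le p_0$ this can only hold in a thin set of exceptional primes. In any case, the prime number theorem gives the crude bound $\pi(p_0)/\pi(x)=O(t^2\log x/(x\delta(I)^2))$, which matches the computation already appearing in the heuristic and is the source of the lower constraint on $m$.

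For the large-prime range $p_0<p\le x$ I would invoke \thref{CST} in an effective form---an analogue, for the adelic image $\mathrm{Im}\,\hat\rho_A\subset G(\hat{\mathbb Z})$, of the quantitative Chebotarev--Sato--Tate bounds known for elliptic curves under GRH (in the spirit of Murty--Murty--Saradha or Bucur--Kedlaya)---to approximate the normalized count by $(\#\mathscr{C}(m,t)/\#\mathscr{G}(m))\int_I\Phi(s)\,\mathrm{d}s$. Feeding in the preceding lemma replaces this by $\Phi(0)F(t)\delta(I)/m+O(\delta(I)/(m\log m))$. Adding the two error contributions and keeping only the dominant one yields the stated bound in the range $C_1(\log x)^2<m\le C_2\delta(I)\sqrt x$, which is precisely where the small-prime error and the main-term discretization error are balanced and compatible with the square-root accuracy posited in \thref{conj1}.

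The main obstacle is ingredient (iii): an effective \thref{CST} with the required joint uniformity in $m$ and $\delta(I)$. The dependence on $m$ is delicate because the conductor of $K_m/\mathbb Q$ grows with $m$, and unconditional Chebotarev error terms lose too much in that aspect; one would essentially need GRH for the Artin $L$-functions attached to the symmetric-power representations occurring in the decomposition of $\hat\rho_A$, together with careful control of the conductor in terms of $m$. The dependence on $\delta(I)$ requires a Beurling--Selberg or Erd\H{o}s--Tur\'an type smoothing, and its interaction with the $m$-aspect is precisely why the final statement must at present be recorded as a conjecture rather than a theorem.
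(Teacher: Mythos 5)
Your proposal reproduces essentially the same heuristic that the paper itself uses to motivate this statement (which is recorded as a conjecture, with no proof beyond that heuristic): the same split of the prime count at the threshold $t^2/(16\delta(I)^2)$, the same prime-number-theorem bound $O\left(t^2\log x/(x\delta(I)^{2})\right)$ for the small primes, and the same appeal to the preceding lemma for the main term of \eqref{eq8}, with the constraints $C_1(\log x)^2<m\le C_2\delta(I)\sqrt{x}$ arising from balancing these errors against \thref{conj1}. Your identification of the missing ingredient---an effective form of \thref{CST} uniform in both $m$ and $\delta(I)$---is precisely the gap the paper does not close either, which is why the statement is a conjecture rather than a theorem.
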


\section{A conditional proof of the Lang-Trotter conjecture}
In what follow we keep all the notation introduced in the previous sections. For an abelian surface $A$ and an arbitrary integer $t\in\mathbb{Z}$, we consider the counting function 
$$
\pi_{A}(x,t):=\#\{p\le x\;|\; p\nmid N_{A}\; a_{1,p}=t\}.
$$
\begin{thm}\thlabel{mainTh}
Let $A/\mathbb{Q}$ be a polarized abelian surface isogenous to a product of not $\overline{\mathbb{Q}}$-isogenous non-CM elliptic curves, for which \thref{conj} holds.  Let $t\in\mathbb{Z}\setminus\{0\}$, we then have 
$$
\pi_{A}(x,t)\sim\dfrac{\Phi(0)}{2} F(t) \dfrac{\sqrt{x}}{\log x}\qquad\text{as} \quad x\to\infty.
$$
\end{thm}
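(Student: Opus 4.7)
The plan is to choose interval and modulus parameters $(I,m)$, depending on $x$, so that the joint condition ``$\frac{a_{1,p}}{4\sqrt{p}}\in I$ and $a_{1,p}\equiv t\pmod m$'' cuts out exactly the primes with $a_{1,p}=t$, up to a negligible small-prime contribution. Concretely, I would take $\delta:=(\log x)^{-1/2}$, let $I:=(-\delta/2,\delta/2)$, and let $m$ be the smallest integer strictly greater than $2\delta\sqrt{x}+|t|$. For $p\le x$, any value $a_{1,p}=t+km$ with $k\ne 0$ obeys $|a_{1,p}|\ge m-|t|>2\delta\sqrt{x}\ge 2\delta\sqrt{p}$, in violation of the interval condition, and $a_{1,p}=t$ itself meets the interval condition precisely when $p>t^2/(4\delta^2)=\tfrac{1}{4}t^2\log x$. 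Hence
$$
\#\{p\le x:\tfrac{a_{1,p}}{4\sqrt{p}}\in I,\ a_{1,p}\equiv t\pmod m\}=\pi_A(x,t)-\#\{p\le \tfrac{1}{4}t^2\log x:a_{1,p}=t\},
$$
and the subtracted term is bounded trivially by $\pi(\tfrac{1}{4}t^2\log x)=O(t^2\log x/\log\log x)=o(\sqrt{x}/\log x)$ for fixed $t$.

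Next I would verify that $(I,m)$ lies in the admissible range of \thref{conj}: by construction $0\in I\subset[-1,1]$; since $m/(\delta\sqrt{x})\to 2$ as $x\to\infty$, the inequalities $C_1(\log x)^2<m\le C_2\delta(I)\sqrt{x}$ hold for all sufficiently large $x$ (in the natural regime $C_2>2$ under which the heuristic derivation of \thref{conj} is meaningful); and $\log m=\tfrac{1}{2}\log x+O(\log\log x)$ gives $\delta<1/\sqrt{\log m}$ eventually. Applying \thref{conj} and then the preceding lemma to expand $\frac{\#\mathscr{C}(m,t)}{\#\mathscr{G}(m)}\int_I\Phi(s)\,\mathrm{d}s$ yields
$$
\#\{p\le x:\tfrac{a_{1,p}}{4\sqrt{p}}\in I,\ a_{1,p}\equiv t\pmod m\}=\pi(x)\,\Phi(0)F(t)\,\frac{\delta}{m}+O\!\left(\frac{\pi(x)\,\delta}{m\log m}\right).
$$

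To finish, since $m=2\delta\sqrt{x}+O(|t|+1)$ and $\delta\sqrt{x}\to\infty$, one has $\delta/m=\frac{1}{2\sqrt{x}}(1+o(1))$, and combining with $\pi(x)\sim x/\log x$ gives the leading contribution $\tfrac{1}{2}\Phi(0)F(t)\cdot\sqrt{x}/\log x\cdot(1+o(1))$. The conjectured error is $O(\sqrt{x}/(\log x)^2)$ because $\log m\asymp\log x$, and together with the small-prime tail it is $o(\sqrt{x}/\log x)$. This yields the asserted asymptotic $\pi_A(x,t)\sim\tfrac{\Phi(0)}{2}F(t)\sqrt{x}/\log x$.

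The only genuinely delicate point is the simultaneous calibration of $\delta$ and $m$: the modulus must be large enough for $a_{1,p}=t$ to be the unique representative of the class $t\pmod m$ inside $(-2\delta\sqrt{p},2\delta\sqrt{p})$ for every $p\le x$, forcing $m>2\delta\sqrt{x}+|t|$, yet small enough to respect $m\le C_2\delta(I)\sqrt{x}$, while $\delta$ has to sit below $1/\sqrt{\log m}$ and above the floor needed for the small-prime tail to be negligible. The choice $\delta=(\log x)^{-1/2}$ threads this needle and, crucially, drives $m/(\delta\sqrt{x})$ to the critical value $2$, which is exactly what produces the constant $\tfrac{1}{2}\Phi(0)F(t)$ in the Lang-Trotter asymptotic.
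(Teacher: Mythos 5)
Your calibration strategy is essentially the paper's, but there is one genuine gap: the choice of modulus, ``$m:=$ smallest integer strictly greater than $2\delta\sqrt{x}+|t|$'', is incompatible with the expansion you invoke at the end. The identity \eqref{eq5} rests on \eqref{eq4}, i.e.\ $F_t(m)=F(t)+O(1/\log m)$, and that estimate is established (and is only true) for $m$ tending to infinity \emph{under the divisibility ordering}: the proof of the lemma writes $m=m_{A,t}\prod_{\ell\le \ell_n}\ell^{\kappa_\ell}$ with $m_{A,t}\mid m$ and every prime $\ell\le\ell_n\asymp\log m$ dividing $m$ to exponent at least $v_\ell(t)+1$, so that $F_t(m)$ already contains all local factors up to $\ell_n$. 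A generic integer of size about $2\sqrt{x}/\sqrt{\log x}$ has no such property; if, say, $m$ is prime with $m\nmid t\,m_A$, then $F_t(m)$ is the single local factor $\frac{m(m^4-m^3-2m^2+m+2)}{(m^2-1)^2(m-1)}=1+O(m^{-3})$, which is not $F(t)$ in general. Applying \thref{conj} with your $m$ is legitimate (the conjecture is stated for arbitrary $m$ in the admissible range), but the main term it leaves you with is $\Phi(0)\,F_t(m)\,\delta/m+O(\delta/(m\log m))$, and with your choice $F_t(m(x))$ does not converge to $F(t)$; it oscillates with the factorization of $m(x)$, so the constant in the asymptotic would come out wrong.

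The repair is to reverse the order of the choices, which is exactly what the paper does: let $m=m(x)$ run through highly divisible integers (so that $m\overset{\backsim}{\to}\infty$ as $x\to\infty$) selected from a polynomially wide window such as $|t|x^{1/4}<m<\sqrt{x}/\log x$ --- wide enough to always contain an element of the sparse divisibility net --- and then set $I:=\left(-\frac{m-|t|}{4\sqrt{x}},\frac{m-|t|}{4\sqrt{x}}\right)$, so that $\delta(I)\sqrt{x}=(m-|t|)/2$ and your pinning argument and critical ratio $2$ go through verbatim. Your parametrization (fix $\delta=(\log x)^{-1/2}$ first, then take the least admissible $m$) cannot be salvaged as is, because the window $(2\delta\sqrt{x}+|t|,\,C_2\delta\sqrt{x}]$ has bounded multiplicative width and need not contain any suitably divisible integer. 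Apart from this, your argument matches the paper's: the trivial tail bound $\pi(\tfrac14 t^2\log x)$ is in fact simpler than the paper's appeal to the upper bound of Chen--Jones--Serban (which the paper needs only because it allows $m$ as small as $|t|x^{1/4}$), and the requirement $C_2>2$ that you flag explicitly is equally implicit in the paper's choice of $I$.
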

\begin{proof}
Let $x\ge 2$ be a real number, and $m:=m(x)$ an integer valued function, such that
$$
 |t|x^{1/4} <m<\dfrac{\sqrt{x}}{\log x}.
$$ 
Next, consider the interval $I=\left(-\frac{m-|t|}{4\sqrt{x}},\frac{m-|t|}{4\sqrt{x}}\right)$ and notice that $\delta(I)=\frac{m-|t|}{2\sqrt{x}}$.
From the second inequality of the above displayed equation, one sees $\sqrt{\log m}<\log x$. Thus
$$\delta(I)<\frac{1}{\sqrt{\log m}}.$$
It then follows from \thref{conj} that
\begin{equation}\label{eq9}
\dfrac{\#\{p\le x : \frac{a_{1,p}}{4\sqrt{p}}\in I,\;\; a_{1,p}\equiv t\pmod m\}}{\pi(x)}= \dfrac{\#\mathscr{C}(m,t)}{\#\mathscr{G}(m)}\int_{I}\Phi(s)ds+O\left(\dfrac{1}{\sqrt{x}\log m}\right).
\end{equation}
On the other hand one has
\begin{multline*}
\#\left\{p\le x : \frac{a_{1,p}}{4\sqrt{p}}\in I,\;\; a_{1,p}\equiv t\pmod m\right\}= \#\left\{ \frac{x|t|^2}{(m-|t|)^2}<p\le x : \frac{a_{1,p}}{4\sqrt{p}}\in I,\right.\\ a_{1,p}\equiv t\pmod m \Bigg\}+\#\left\{p\le \frac{x|t|^2}{(m-|t|)^2} : \frac{a_{1,p}}{4\sqrt{p}}\in I, a_{1,p}\equiv t\pmod m \right\}.
\end{multline*}
Since, for a prime number $p$ such that $\frac{x|t|^2}{(m-|t|)^2}<p\le x$, 
$$
\frac{a_{1,p}}{4\sqrt{p}}\in I\;\;\text{and}\;\;a_{1,p}\equiv t\!\!\pmod m 
\Longleftrightarrow a_{1,p}=t,
$$
we then have
$$
\pi_A(x,t)=\pi_A\left(\dfrac{x|t|^2}{(m-|t|)^2},t\right)+\#\left\{ \dfrac{x|t|^2}{(m-|t|)^2}<p\le x : \frac{a_{1,p}}{4\sqrt{p}}\in I, a_{1,p}\equiv t\pmod m\right\}.
$$
From \cite[(1) Theorem 2.4.1]{Chen} and the lower bound of $m$, we have $$\pi_A\left(\dfrac{x|t|^2}{(m-|t|)^2},t\right)\ll \dfrac{\sqrt{x}}{\log^{8/7-\varepsilon}x},$$ for every $\varepsilon>0$. Thus
$$
\#\left\{p\le x : \frac{a_{1,p}}{4\sqrt{p}}\in I,\;\; a_{1,p}\equiv t\pmod m\right\}=\pi_A(x,t)+O\left(\dfrac{\sqrt{x}}{\log^{8/7-\varepsilon}x}\right).
$$
It then follows from the prime number theorem that
\begin{equation}\label{eq6}
\dfrac{\#\left\{p\le x : \frac{a_{1,p}}{4\sqrt{p}}\in I,\;\; a_{1,p}\equiv t\pmod m\right\}}{\pi(x)} =\\ \dfrac{\pi_A(x,t)}{\pi(x)}+O\left(\dfrac{1}{\sqrt{x}\log^{1/7-\varepsilon} x}\right).
\end{equation}
By multiplying both \eqref{eq6} and \eqref{eq9} by $\sqrt{x}$ in conjunction with \eqref{eq5} we find that 
$$
\dfrac{\Phi(0)}{2}F(t)\left(1-\dfrac{|t|}{m}\right)+O\left(\dfrac{1}{\log m}\right)= \dfrac{\sqrt{x}\cdot \pi_A(x,t)}{\pi(x)}+O\left(\dfrac{1}{\log^{1/7-\varepsilon} x}\right).
$$
By letting $x\to\infty$ and then $m \overset{\backsim}{\to}\infty$ we get 
$$
\lim_{x\to\infty}\dfrac{\sqrt{x}\cdot\pi_A(x,t)}{\pi(x)}=\dfrac{\Phi(0)}{2}F(t).
$$
Whence, by the prime number theorem
$$
\pi_A(x,t)\sim\dfrac{\Phi(0)}{2}F(t)\dfrac{\pi(x)}{\sqrt{x}}\sim\dfrac{\Phi(0)}{2}F(t)\dfrac{\sqrt{x}}{\log x}.
$$
\end{proof}
\subsection*{Acknowledgment}
I wish to thank Gabor Wiese for his many valuable comments.

\Addresses
\end{document}